\documentclass[12pt]{article}

\usepackage{url}
\usepackage{amsthm}
\usepackage{amsmath}
\usepackage{fullpage}
\usepackage{enumerate}
\usepackage{amssymb}

\usepackage{booktabs}
\usepackage{float}

\newtheorem{thm}{Theorem}
\newtheorem{lem}{Lemma}

\newcommand{\e}{\rm e}

\title{Fujii's development on Chebyshev's conjecture}
\author{Dave Platt\footnote{Supported by Australian Research Council Discovery Project DP160100932 and  EPSRC Grant EP/K034383/1.}\\ School of Mathematics \\ University of Bristol, Bristol, UK\\dave.platt@bris.ac.uk\and  Tim Trudgian\footnote{Supported by Australian Research Council Discovery Project DP160100932 and Future Fellowship FT160100094.} \\
School of Physical, Environmental and Mathematical Sciences\\ The University of New South Wales Canberra, Australia \\
  t.trudgian@adfa.edu.au}

\begin{document}
\maketitle
\begin{abstract}
\noindent
Chebyshev presented a conjecture after observing the apparent bias towards primes congruent to $3\pmod 4$. His conjecture is equivalent to a version of the Generalised Riemann Hypothesis. Fujii strengthened this conjecture; we strengthen it still further using detailed computations of zeroes of Dirichlet $L$-functions.
  \end{abstract}

\section{Introduction}
Chebyshev observed that there appear to be more primes congruent to $3\pmod 4$ that $1\pmod 4$. This bias has spawned much research --- see, e.g. the seminal work by Rubinstein and Sarnak \cite{Rubin}, and also Ford and Konyagin \cite{Ford}. In 1853 Chebyshev conjectured that
\begin{equation}\label{cook}
\sum_{p>2} (-1)^{(p-1)/2} e^{-xp} \rightarrow -\infty,
\end{equation}
as $x\rightarrow 0$. Hardy and Littlewood \cite{Hardy} and Landau \cite{Landau} showed that (\ref{cook}) is equivalent to all of the non-trivial zeroes of $L(s, \chi_{4})$ having real part $\sigma = \frac{1}{2}$, where we use $\chi_{4}$ to denote the non-principal Dirichlet character modulo 4. We shall refer to this specialised version of the Generalised Riemann Hypothesis for $\chi_{4}$ as `GRH for $\chi_{4}$'. 

In this article we examine the attenuation factor $e^{-xp}$ in (\ref{cook}). Fujii \cite[Thm 1]{Fujii} showed that for all $0<\alpha< 4.19$  the statement
\begin{equation}\label{stoneman}
\sum_{p>2} (-1)^{(p-1)/2} e^{-(xp)^{\alpha}} \rightarrow -\infty \; \textrm{as}\; x\rightarrow 0 
\end{equation}
is equivalent to GRH for $\chi_{4}$. Note that the larger one can take $\alpha$ the quicker the summands in (\ref{stoneman}) attenuate, and hence there must be an even greater bias towards primes congruent to $3\pmod 4$.

Fujii's argument is elegant; his result of $\alpha<4.19$ is a result of some numerical calculations involving the first few zeroes $\beta + i \gamma$ of $L(s, \chi_{4})$. In fact, Fujii only uses the fact that $\gamma_{1}>6$ and that  $\sum_{\gamma>0} \gamma^{-2} < 1/5$.

We use some more extensive calculations on the zeroes of $L(s, \chi_{4})$ and some optimisation to improve Fujii's work. The result is the following theorem.
\begin{thm}\label{root}
Suppose that $0<\alpha< 20.40442$. Then the statement
\begin{equation*}
\sum_{p>2} (-1)^{(p-1)/2} e^{-(xp)^{\alpha}} \rightarrow -\infty \; \textrm{as}\; x\rightarrow 0 
\end{equation*}
is equivalent to all of the non-trivial zeroes of $L(s, \chi)$ having real part $\sigma = \frac{1}{2}$, where $\chi$ is the non-principal Dirichlet character modulo 4.
\end{thm}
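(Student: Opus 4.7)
The plan is to follow Fujii's framework \cite{Fujii}, sharpening the key inequality using precise numerical data on zeros of $L(s, \chi_4)$.

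Applying Mellin inversion to $e^{-y^{\alpha}}$ (whose Mellin transform is $\alpha^{-1}\Gamma(s/\alpha)$), the sum can be written as
$$\sum_{p > 2}\chi_4(p)\, e^{-(xp)^{\alpha}} = \frac{1}{2\pi i \alpha}\int_{(c)} \Gamma(s/\alpha)\, x^{-s}\, P(s)\, ds, \qquad c > 1,$$
where $P(s) = \sum_{p > 2}\chi_4(p)\, p^{-s}$. By an Euler-factor expansion, $P(s) = \log L(s, \chi_4) - \tfrac{1}{2}\log(\zeta(2s)(1 - 2^{-2s})) + H(s)$ with $H$ holomorphic in $\mathrm{Re}(s) > 1/3$. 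Under GRH for $\chi_4$, the only singularities of the integrand in the strip $1/3 < \mathrm{Re}(s) \leq c$ lie on $\mathrm{Re}(s) = 1/2$: a logarithmic branch point at $s = 1/2$ arising from the pole of $\zeta(2s)$ (the Chebyshev-bias source), and a logarithmic branch point at $s = 1/2 + i\gamma$ for each nontrivial zero of $L(s, \chi_4)$.

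Deforming the contour to $\mathrm{Re}(s) = 1/2 - \epsilon$ with keyhole detours around each branch point (cuts taken leftward along horizontal lines), each detour evaluates to a line integral of $\Gamma(s/\alpha) x^{-s}$ along its cut. A Laplace-type asymptotic as $x \to 0^{+}$ extracts a negative main term of magnitude $\Gamma(1/(2\alpha))\, x^{-1/2}/(2\alpha\log(1/x))$ from the bias branch point, and an oscillating contribution from the zeros of magnitude at most $2\sum_{\gamma > 0}|\Gamma((1/2 + i\gamma)/\alpha)|\, x^{-1/2}/(\alpha\log(1/x))$, while the residual vertical contour contributes $O(x^{-1/2 + \epsilon})$ by Stirling. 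For the sum to tend to $-\infty$ it then suffices (and, allowing worst-case phase alignment of the $x^{-i\gamma}$, is essentially necessary) that
$$\Gamma\!\bigl(\tfrac{1}{2\alpha}\bigr) > 4\sum_{\gamma > 0}\bigl|\Gamma\!\bigl(\tfrac{1/2 + i\gamma}{\alpha}\bigr)\bigr|. \qquad (\ast)$$

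The main obstacle is verifying $(\ast)$ rigorously for $\alpha$ as large as $20.40442$. As $\alpha$ grows, the left side behaves like $2\alpha$ (since $\Gamma(z) \sim 1/z$ near $0$), while individual summands on the right behave like $\alpha/\sqrt{1/4 + \gamma^2}$, although Stirling supplies exponential decay $e^{-\pi\gamma/(2\alpha)}$ in $\gamma$ once $\gamma/\alpha$ is large. To close the inequality I would split the sum at an optimized cutoff $Y$: evaluate the head $\sum_{0 < \gamma \leq Y}|\Gamma((1/2 + i\gamma)/\alpha)|$ directly from a rigorous interval-arithmetic table of low-lying zeros of $L(s, \chi_4)$, and bound the tail $\sum_{\gamma > Y}$ using Stirling together with an explicit upper bound on $\sum_{\gamma > Y}\gamma^{-2}$. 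Fujii used only $\gamma_1 > 6$ and $\sum_\gamma \gamma^{-2} < 1/5$, which limited him to $\alpha < 4.19$; the improved range $\alpha < 20.40442$ comes from plugging in many rigorously computed zeros and optimizing the split point $Y$. For the converse direction (that the limit forces GRH), if $L(s, \chi_4)$ had a zero $\beta + i\gamma$ with $\beta > 1/2$, the Mellin expansion would acquire an oscillating term of amplitude $x^{-\beta}/\log(1/x)$, dwarfing the main term and precluding the limit; this is the standard Landau--Hardy--Littlewood argument and it holds for every $\alpha > 0$.
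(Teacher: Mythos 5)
Your proposal follows essentially the same route as the paper: Fujii's decomposition reduces matters to the inequality $\sum_\rho\bigl|\Gamma\bigl(\tfrac{1}{2\alpha}+\tfrac{i\gamma}{\alpha}\bigr)\bigr| < \tfrac12\Gamma\bigl(\tfrac{1}{2\alpha}\bigr)$ (your $(\ast)$ is exactly this with the sum folded onto $\gamma>0$), which is then verified at $\alpha=20.40442$ by summing rigorously computed low-lying zeros and bounding the tail via Stirling. The only differences are minor: the paper bounds the tail with a Lehman-style lemma and the explicit zero-counting formula for $N(T,\chi_4)$ rather than with $\sum_{\gamma>Y}\gamma^{-2}$, and it uses Turing's method to certify that the list of zeros up to the cutoff $T_1=1127$ is complete --- a point of rigor you should make explicit, since a missed low-lying zero would invalidate the head sum.
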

We introduce Fujii's work in \S \ref{rum}, and prove Theorem \ref{root} in \S \ref{coke}. We remark at the end of \S \ref{coke} that it appears impossible to improve Theorem \ref{rum} further using Fujii's method.
\section{Fujii's method and some lemmas}\label{rum}
Proceeding as in Fujii \cite[\S 3]{Fujii}, we have, under the assumption of GRH for $\chi_{4}$,
\begin{equation*}
S = \sum_{p>2} (-1)^{(p-1)/2} e^{-(xp)^{\alpha}} = S_{1} + S_{2} + S_{3} + S_{4},
\end{equation*}
where
\begin{equation*}
\begin{split}
S_{1} &= -\frac{1}{2} \Gamma\left(\frac{1}{2\alpha}\right) x^{-1/2} + o(x^{-1/2})\\
S_{2}+ S_{4} &= o(x^{-1/2})\\
|S_{3}| &\leq x^{-1/2} \sum_{\rho} \left| \Gamma\left(\frac{1}{2\alpha} + \frac{i\gamma}{\alpha}\right)\right|.
\end{split}
\end{equation*}
Therefore, to show that $S\rightarrow -\infty$ as $x\rightarrow 0$ it is sufficient to show that
\begin{equation}\label{stokes?}
\sum_{\rho} \left| \Gamma\left(\frac{1}{2\alpha} + \frac{i\gamma}{\alpha}\right)\right| < \frac{1}{2}\Gamma\left(\frac{1}{2\alpha}\right). 
\end{equation}
We require an explicit version of Stirling's formula to bound the summands in (\ref{stokes?}). Many versions abound in the literature: we shall use the one given by Olver \cite[p.\ 294]{Olver}, namely
\begin{equation}\label{vince_ha!}
\log \Gamma(z) = \left(z-\frac{1}{2}\right) \log z - z + \frac{1}{2} \log 2\pi + \frac{\vartheta}{6|z|}, \quad (|\arg z|\leq \frac{\pi}{2}).
\end{equation}
Using (\ref{vince_ha!}) and (\ref{stokes?}) and that fact that $\tan^{-1}x < x$ for all $x$, we see that we have
\begin{equation*}\label{anderson}
 \left| \Gamma\left(\frac{1}{2\alpha} + \frac{i\gamma}{\alpha}\right)\right| \leq (2\pi)^{1/2} \left( \frac{\sqrt{\gamma^{2} + \frac{1}{4}}}{\alpha}\right)^{\frac{1}{2\alpha} - \frac{1}{2}} \exp\left( -\frac{\pi \gamma}{2\alpha} + \frac{\alpha}{6 \sqrt{\gamma^{2} + \frac{1}{4}}}\right).
\end{equation*}
We aim at writing the sum in  (\ref{stokes?}) as
$\Sigma = \Sigma_{1} + \Sigma_{2}$ where $0<\gamma \leq T_{1}$ in $\Sigma_{1}$ and $\gamma> T_{1}$ in $\Sigma_{2}$. We shall choose $T_{1}$ such that we have detailed information on the location of zeroes with $\gamma \leq T_{1}$. We shall sum the contribution from these zeroes explicitly. We shall then estimate $\Sigma_{2}$ using (\ref{anderson}) and  bounds on $N(T, \chi_{4})$, the number of zeroes of $L(s, \chi_{4})$ with $|\gamma|\leq T$. We have such an estimate in \cite{Trud}, 
namely, that
\begin{equation}\label{gooch}
|N(T, \chi_{4}) - \frac{T}{\pi} \log \frac{2 T}{\pi e}| \leq C_{1} \log 4T + C_{2}, \quad (T\geq 1),
\end{equation}
where $C_{1}$ and $C_{2}$ are explicitly given constants. Note that the definition of $N(T, \chi)$ counts zeroes with $|\gamma| \leq T$. We actually wish to count the zeroes with $\gamma\geq 0$. Therefore, we divide (\ref{gooch}) by 2, giving us
\begin{equation*}
N(T, \chi) = \frac{T}{2\pi} \log \frac{2 T}{\pi} - \frac{T}{2\pi} + Q(T),
\end{equation*}
where
\begin{equation*}
|Q(T)| \leq \frac{C_{1}}{2} \log 4 T + \frac{C_{2}}{2} \leq \theta_{1}\log T, \quad (T\geq T_{1})
\end{equation*}
say. Henceforth we consider everything in terms of $T_{1}$, which will be the truncation point in the sum. 
We need the following, which is a trivial adaptation of a result by Lehman.
\begin{lem}\label{underwood}
Let $\phi(t)$ be a decreasing function with continuous derivative on $[T_{1}, T_{2}]$. For $L(s, \chi)$ the non-principal $L$-function with $\chi$ to the modulus $4$ we have, for any $T_{1}\geq 1$ that
\begin{equation*}
\sum_{T_{1} < \gamma \leq T_{2}} \phi(\gamma) = \frac{1}{2\pi} \int_{T_{1}}^{T_{2}} \phi(t) \log \frac{2t}{\pi} \, dt + \theta_{1}\left\{ 2 \phi(T_{1}) \log T_{1} + \int_{T_{1}}^{T_{2}} \frac{\phi(t)}{t} \, dt \right\},
\end{equation*}
where $C_{1}$ and $C_{2}$ are in (\ref{gooch}) and $\theta_{1}$ is such that
\begin{equation}\label{hemmings}
\theta_{1} \geq \frac{\frac{C_{1}}{2} \log 4 T_{1} + \frac{C_{2}}{2}}{\log T_{1}}.
\end{equation}
\end{lem}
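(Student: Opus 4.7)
The plan is to recognize the sum as a Riemann--Stieltjes integral against the counting function $N(t,\chi)$, split $N$ into its smooth main term plus the error $Q$, integrate the main term directly, and integrate the $Q$-part by parts, using the decrease of $\phi$ to keep signs under control.

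First I would write
\begin{equation*}
\sum_{T_{1} < \gamma \leq T_{2}} \phi(\gamma) = \int_{T_{1}}^{T_{2}} \phi(t)\, dN(t,\chi) = \int_{T_{1}}^{T_{2}} \phi(t)\, d\!\left(\frac{t}{2\pi}\log\frac{2t}{\pi} - \frac{t}{2\pi}\right) + \int_{T_{1}}^{T_{2}} \phi(t)\, dQ(t),
\end{equation*}
which is legitimate since $\phi$ has a continuous derivative on $[T_{1},T_{2}]$ and $N(\cdot,\chi)$ is of bounded variation. A direct differentiation of the smooth part shows that its $t$-derivative is $\frac{1}{2\pi}\log\frac{2t}{\pi}$, which yields the main term $\frac{1}{2\pi}\int_{T_{1}}^{T_{2}}\phi(t)\log\frac{2t}{\pi}\,dt$.

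Next I would integrate the remaining piece by parts:
\begin{equation*}
\int_{T_{1}}^{T_{2}}\phi(t)\,dQ(t) = \phi(T_{2})Q(T_{2}) - \phi(T_{1})Q(T_{1}) - \int_{T_{1}}^{T_{2}} Q(t)\phi'(t)\,dt.
\end{equation*}
Using $|Q(t)| \leq \theta_{1}\log t$ for $t\geq T_{1}$ and, crucially, that $\phi'(t) \leq 0$ (so $|\phi'(t)| = -\phi'(t)$), the error is bounded by
\begin{equation*}
\theta_{1}\phi(T_{2})\log T_{2} + \theta_{1}\phi(T_{1})\log T_{1} + \theta_{1}\int_{T_{1}}^{T_{2}} (\log t)(-\phi'(t))\,dt.
\end{equation*}
A further integration by parts gives
\begin{equation*}
\int_{T_{1}}^{T_{2}}(\log t)(-\phi'(t))\,dt = \phi(T_{1})\log T_{1} - \phi(T_{2})\log T_{2} + \int_{T_{1}}^{T_{2}}\frac{\phi(t)}{t}\,dt,
\end{equation*}
and the two $\phi(T_{2})\log T_{2}$ contributions cancel in the total, leaving the error at most $2\theta_{1}\phi(T_{1})\log T_{1} + \theta_{1}\int_{T_{1}}^{T_{2}} \phi(t)/t\,dt$, as required.

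There is no serious obstacle here; the argument is essentially a two-step Abel summation. The only point requiring attention is the sign of $\phi'$, which we exploit rather than merely bound in absolute value, so that the boundary contributions at $T_{2}$ cancel instead of accumulating; without this cancellation one would pick up an extra $2\theta_{1}\phi(T_{2})\log T_{2}$ term and the stated clean form would fail. The choice of $\theta_{1}$ as in (\ref{hemmings}) ensures the inequality $|Q(t)|\leq \theta_{1}\log t$ holds uniformly for $t\geq T_{1}$, which is exactly what the estimate needs.
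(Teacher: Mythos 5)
Your argument is correct and is precisely the partial-summation proof that the paper defers to by citing Lehman \cite[p.~400]{Lehman}: write the sum as a Stieltjes integral against $N(t,\chi)$, split off the smooth main term, and integrate the $Q$-part by parts twice so that the $\phi(T_{2})\log T_{2}$ boundary contributions cancel. The only implicit hypothesis worth flagging is $\phi\geq 0$ (needed for the step $|\phi(T_{2})Q(T_{2})|\leq\theta_{1}\phi(T_{2})\log T_{2}$), which holds for the exponential $\phi$ to which the lemma is applied.
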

\begin{proof}
The proof follows the proof given in Lehman \cite[p.\ 400]{Lehman}.
\end{proof}
We now apply Lemma \ref{underwood} with $$\phi(t) = \exp\left\{ \frac{-\pi t}{2\alpha}\right\}$$ and send $T_{2}\rightarrow\infty$. We obtain
\begin{equation}\label{russell}
\sum_{\gamma> T_{1}} \phi(\gamma)\leq\exp\left(-\frac{\pi T_1}{2\alpha}\right)\left[\frac{\alpha}{\pi^2}\log\frac{2T_1\e}{\pi}+\theta_1\log T_1^2\e\right].
\end{equation}
Putting this together with (\ref{anderson}) we find that $\alpha$ is admissible in Theorem \ref{root} if
\begin{equation}\label{sutcliffe}
\begin{split}
&   2(2\pi)^{1/2} \left(\frac{\sqrt{T_1^2+\frac{1}{4}}}{\alpha}\right)^{\frac{1}{2\alpha}-\frac12}\exp\left(\frac{\alpha}{6\sqrt{T_1^2+\frac{1}{4}}}\right)\exp\left(-\frac{\pi T_1}{2\alpha}\right)\left[\frac{\alpha}{\pi^2}\log\frac{2T_1\e}{\pi}+\theta_1\log T_1^2\e\right]\\
&+2\sum_{0<\gamma< T_{1}} \left|\Gamma\left(\frac{1}{2\alpha}+\frac{i \gamma}{\alpha}\right)\right| \leq \frac{1}{2}\Gamma\left(\frac{1}{2\alpha}\right),
\end{split}
\end{equation}
subject to (\ref{hemmings}). We replicate (\ref{hemmings}) here for convenience, and choose $C_{1} = 0.315$ and $C_{2} = 6.445$ as in \cite{Trud}. Therefore, we require that (\ref{sutcliffe}) be satisfied along with
\begin{equation*}
\theta_{1} = \frac{0.1575 \log 4 T_{1} + 3.2225}{\log T_{1}}.
\end{equation*}

\section{Computations and proof of Theorem \ref{root}}\label{coke}

We used ``lcalc'' \cite{lcalc} to produce a list of the lowest $1\,000$ zeroes of $L(s,\chi_4)$. The output gives $11$ decimal places reducing to $10$ for the highest zeroes. We checked each $t$ actually did represent a zero by using ARB \cite{ARB} to rigorously compute
$$
\frac{4}{\pi}^{s/2}\Gamma\left(\frac{s+1}{2}\right)4^{-s}\left[\zeta\left(s,\frac{1}{4}\right)-\zeta\left(s,\frac{3}{4}\right)\right]
$$
with $s=1/2+i(t-\delta)$ and $s=1/2+i(t+\delta)$ with $\delta=10^{-10}$ and checking that we saw a sign change in every case. We then use a rigorous version of Turing's method \cite{Booker} to confirm that ``lcalc'' had (as expected) found all the zeroes with $\Im \rho \in [0,1\,127]$.

Taking these lowest $1\,000$ zeroes we can set $T_1=1\,127$ and we see that $\alpha=20.40442$ gives us
$$
\sum\limits_{|\Im \rho |<T_1} \left|\Gamma\left(\frac{1}{2\alpha}+\frac{i \gamma}{\alpha}\right)\right|< 20.1276643
$$
whereas
$$
\frac{1}{2}\Gamma\left(\frac{1}{2\alpha}\right)> 20.1276649.
$$
We also find that the contribution from zeroes with $|\Im\rho| > T_1$ is strictly less in absolute terms than $4\times 10^{-38}$ so $\alpha=20.40442$ is admissible.

Further, if we take $\alpha=20.40443$ we find that the sum over the zeroes with imaginary part $<1\,127$ exceeds $\Gamma(1/(2\alpha))/2$ even if we ignore the contribution from the rest of the zeroes. Thus regardless of how many more zeroes we consider, how precisely we know their imaginary parts or how small we can make the constant $\theta_1$, we will never be able to show that $\alpha=20.40443$ is admissible by this method.

\end{document}